\newcommand{\pG}{\mathcal{G}}
\newcommand{\pS}{\mathcal{S}}
\newcommand{\eP}{\mathscr{P}}
\newcommand{\eT}{\mathscr{T}}
\newcommand{\eM}{\mathscr{M}}
\newcommand{\eN}{\mathscr{N}}
\newcommand{\bN}{\mathbb{N}}
\DeclareMathOperator{\diam}{diam}
\newtheorem{theorem}{Theorem}
\newtheorem{proposition}[theorem]{Proposition}
\newtheorem{lemma}[theorem]{Lemma}
\newtheorem{corollary}[theorem]{Corollary}
\theoremstyle{definition}
\numberwithin{equation}{section}
\begin{document}
\title[On the diameter of a super-order-commuting graph]{On the diameter of a super-order-commuting graph}

\author[J. Bra\v{c}i\v{c}]{Janko Bra\v{c}i\v{c}}
\address{Faculty of Natural Sciences and Engineering, University of Ljubljana, A\v{s}ker\v{c}eva c. 12, SI-1000 Ljubljana, Slovenia}
\email{janko.bracic@ntf.uni-lj.si}
\author[B. Kuzma]{Bojan Kuzma}
\address{${}^1$University of Primorska, Glagolja\v{s}ka 8, SI-6000 Koper, Slovenia and ${}^2$IMFM, Jadranska 19,  SI-1000 Ljubljana, Slovenia}
\email{bojan.kuzma@famnit.upr.si}

\keywords{Symmetric group, Alternating group, Order relation, Super graphs, Commuting graphs, Prime numbers, d-complete sequence}
\subjclass[2020]{Primary: 05C25, Secondary: 05C69, 11B13, 11A67, 11N13. }

\begin{abstract}
We answer a question about the diameter of an order-super-commuting graph on a symmetric group by studying the number-theoretical concept
of  $d$-complete sequences of primes in arithmetic progression.
\end{abstract}
\maketitle

\section{Introduction} \label{Sec01}
\setcounter{theorem}{0}

In a recent paper \cite{Dalal-Mukherjee-Patra} the authors studied the properties of super graphs on finite groups. These graphs are based
on already well-studied commuting, power,  and enhanced power graphs (we refer to \cite{Cameron} for a nice survey and the current
development), but with an equivalence relation thrown in. More precisely, if $\sim$ is an equivalent relation on a graph  $\Gamma$, then
$\sim$-super-$\Gamma$ has the same vertex set as $\Gamma$ however, its edge set is enlarged, whereby distinct $x,y$ form an edge if there
exist $u\sim x$ and $v\sim y$ with either $u=v$ or $(u,v)\in E(\Gamma)$.
In particular, with the order relation on a finite group $\pG$ (i.e., $x\sim y$ if $x$ and $y$  have the same order), the
order-super-commuting graph, $\Delta^o(\pG)$, of a group $\pG$
is a simple graph
$\Delta^o(\pG)$
with the vertex set equal to
$\pG$ and
where two disjoint vertices $x,y$ form an edge if there exists commuting $u,v$ with $|u|=|x|$ and $|v|=|y|$ (here, we also allow $u=v$, so
in particular, each conjugacy class forms an induced complete graph). We should caution that in \cite{Dalal-Mukherjee-Patra} the central
elements and in particular, the identity also belong to the vertex set of the commuting graph, but we do not allow the loops. Notice that
this contrasts with a similar definition in some of the existing literature~\cite{Akbari,Giudici-Parker}, where the central elements are
removed. It was shown in \cite{Dalal-Mukherjee-Patra} that, for $n\ge 4$, the identity, that is, the center of the group, is the only
dominant vertex of $\Delta^o(\pS_n)$, the order-super-commuting graph of the symmetric group $\pS_n$ on $n$ elements. By deleting all the
dominant vertices, one obtains the reduced graph, $\Delta^o(\pS_n)^\ast$. This is connected if and only if neither $n-1$ nor $n$ is a prime
number; moreover, if it is disconnected, then it has exactly two components, and if it is connected, then its diameter is bounded above by
$3$, see \cite[Proposition 4.9 and Theorem 4.11]{Dalal-Mukherjee-Patra}. Whether its diameter is $3$ or smaller was
not determined for all values of $n\ge 4$, but it was shown \cite[Proposition 4.12]{Dalal-Mukherjee-Patra} that
the following holds.

\begin{proposition}\label{prop}
Let $n\ge 4$. If neither $n$ nor $n-1$ is a prime number, then $\diam\Delta^o(\pS_n)^\ast=3$ if and only if there exist
nonempty disjoint subsets  $\eT_1,\eT_2$ consisting of primes smaller or equal to $n$, such that, for some positive
integers $\alpha_p$ and $\beta_q$ we have
$$M^\alpha_{\eT_1}:=\sum_{p\in \eT_1} p^{\alpha_p}\in\{n-1,n\},\qquad
M^\beta_{\eT_2}:=\sum_{q\in \eT_2} q^{\beta_q}\le n,$$
and $p+M^\beta_{\eT_2}>n$, for every $p\in \eT_1$.\qed
\end{proposition}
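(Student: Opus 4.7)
The plan is to reformulate adjacency in $\Delta^o(\pS_n)^\ast$ via the auxiliary function
\[
g(m) := \sum_{p\,\mid\,m}\,p^{v_p(m)},
\]
which is the minimum $N$ for which $\pS_N$ contains an element of order $m$. I would first prove that two distinct vertices $x,y$ are adjacent in $\Delta^o(\pS_n)^\ast$ if and only if $g(\mathrm{lcm}(|x|,|y|))\le n$: given commuting $u,v\in\pS_n$ of orders $|x|,|y|$, the abelian group $\langle u,v\rangle$ must contain an element of order $\mathrm{lcm}(|u|,|v|)$, while conversely, from $w\in\pS_n$ of order $m=\mathrm{lcm}(|x|,|y|)$ the commuting powers $w^{m/|x|}$, $w^{m/|y|}$ have the required orders. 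Hence $\diam\Delta^o(\pS_n)^\ast > 2$ is equivalent to the existence of $x,y$ with $g(\mathrm{lcm}(|x|,|y|))>n$ for which no non-identity $z\in\pS_n$ satisfies both $g(\mathrm{lcm}(|x|,|z|))\le n$ and $g(\mathrm{lcm}(|y|,|z|))\le n$.

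For the \emph{if-direction}, I would pick permutations of orders $|y|=\prod_{p\in\eT_1} p^{\alpha_p}$ and $|x|=\prod_{q\in\eT_2} q^{\beta_q}$; these exist since $g(|y|) = M^\alpha_{\eT_1}\le n$ and $g(|x|) = M^\beta_{\eT_2}\le n$. Disjointness forces $g(\mathrm{lcm}(|x|,|y|))=M^\alpha_{\eT_1}+M^\beta_{\eT_2}\ge (n-1)+2>n$, so $x\not\sim y$. For a putative common neighbour $z$ with $|z|=\prod_{r} r^{\gamma_r}$, the inequality $g(\mathrm{lcm}(|z|,|y|))\le n$ rewrites as
\[
M^\alpha_{\eT_1} \;+\; \sum_{\substack{p\in\eT_1\\ \gamma_p>\alpha_p}}\bigl(p^{\gamma_p}-p^{\alpha_p}\bigr) \;+\; \sum_{r\notin\eT_1,\,\gamma_r\ge 1} r^{\gamma_r} \;\le\; n.
\]
Each correction term is at least $2$ while $n-M^\alpha_{\eT_1}\le 1$, so every correction vanishes, forcing $|z|\mid|y|$. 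Adjacency to $x$ combined with coprimality then gives $g(|z|)\le n - M^\beta_{\eT_2}$, but $|z|>1$ implies $g(|z|)\ge p$ for some $p\in\eT_1$, violating $p + M^\beta_{\eT_2}>n$.

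For the \emph{only-if direction}, given $x,y$ at distance $3$, I set $\eT_1$ and $\eT_2$ to be the primes dividing $|y|$ and $|x|$ respectively, with corresponding exponents. If some prime $p$ lay in both sets, a $p$-cycle $z$ would be a common neighbour (its lcm with either $|x|$ or $|y|$ changes nothing), so $\eT_1\cap\eT_2=\emptyset$. Similarly, if $p\in\eT_1$ with $p + M^\beta_{\eT_2}\le n$, then a $p$-cycle is adjacent to $y$ (as $p\mid|y|$) and to $x$ (via $g(\mathrm{lcm}(p,|x|)) = g(|x|) + p\le n$), again a common neighbour; hence $p + M^\beta_{\eT_2} > n$ for every $p\in\eT_1$. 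Finally, swapping $x,y$ if necessary so that $g(|y|)\ge g(|x|)$: if $g(|y|)\le n-2$ then also $g(|x|)\le n-2$, and a transposition $z$ becomes a common neighbour via $g(\mathrm{lcm}(2,|x|))\le g(|x|)+2\le n$ and likewise for $y$; therefore $M^\alpha_{\eT_1}=g(|y|)\in\{n-1,n\}$.

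The principal technical hurdle is the correction-term bookkeeping in the if-direction, where the hypothesis $M^\alpha_{\eT_1}\ge n-1$ leaves slack of at most $1$, strictly below the minimum contribution $2$ of any raised exponent or newly introduced prime; this rigidity is what collapses ``$z\sim y$'' to ``$|z|\mid|y|$''. Once this is in hand, the whole proof reduces to the adjacency characterization together with well-chosen short-cycle test vertices, namely $p$-cycles and transpositions.
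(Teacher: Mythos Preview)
Your argument is correct. The adjacency criterion $g(\mathrm{lcm}(|x|,|y|))\le n$ is exactly the right reformulation, and both directions go through as you outline. The only cosmetic point is the placement of the swap in the only-if direction: since you define $\eT_1,\eT_2$ from $|y|,|x|$ \emph{before} proving the inequality $p+M^\beta_{\eT_2}>n$, you should either perform the swap first or remark that the $p$-cycle argument is symmetric in $x$ and $y$, so the analogous inequality with the roles reversed also holds and survives the relabeling.

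As for comparison with the paper: there is nothing to compare. The paper does not prove this proposition; it is quoted (with the terminal box) as \cite[Proposition~4.12]{Dalal-Mukherjee-Patra} and used as a black box. The paper's own contribution lies downstream, in establishing via $d$-completeness of the sequences $\eP(4,1)$ and $\eP(4,3)$ that the combinatorial condition of the proposition is satisfied for every admissible $n$. Your write-up therefore supplies what the present paper simply imports from the cited reference.
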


With this proposition, the authors managed to prove that the diameter is $3$
if $n$ or $n-1$ is a nontrivial power of a prime or a sum of two prime
powers, with both primes distinct and greater or equal to $5$. The latter
provided that the strong Goldbach conjecture is true, immediately yields that
$\diam\Delta^o(\pS_n)^\ast=3$, for every even integer $n\ge 4$. This paper
aims to give a complete solution, without relying on the strong Goldbach
conjecture and proves that $\diam\Delta^o(\pS_n)^\ast=3$ for every $n\ge 4$
such that neither $n$ nor $n-1$ is a prime number. Our main ingredient is the
fact that the sequences consisting of primes congruent to $\pm 1$ modulo $4$
are complete (see \Cref{theo01} and its consequences).

An infinite sequence of distinct positive integers $\{ a_n;\; n\in \bN\}$ is
called complete if every sufficiently large positive integer is a sum of
distinct $a_i$ (sometimes such sequences are called weakly
complete, while the term complete sequence is reserved for the case when
every integer is a sum of distinct $a_i$). Erd\H{o}s and Lewin \cite{EL}
call a complete sequence $d$-complete if every sufficiently large integer is
a sum of distinct $a_i$ such that no one divides the other. In \cite{EL},
there are given several examples of $d$-complete sequences. For instance, it
is proved that, for positive integers $p$ and $q$, the sequence $\{
p^{a}q^{b};\; a, b\in \bN \}$ is $d$-complete if and only if $\{ p,q\}=\{
2,3\}$. Bruckman \cite{Bru} has proved that the sequence $\eP=\{
2,3,5,\ldots\}$ of all prime numbers is $d$-complete. We adjust Bruckman's
proof and show that the sequences of all prime numbers congruent to $1$,
respectively to $3$, modulo $4$ are $d$-complete.

\section{Results} \label{Sec02}
\setcounter{theorem}{0}

\subsection{Generating polynomials}
Let $1\le q_1< q_2< \dots$ be a sequence of integers. For every $n\in \bN$, let
\begin{equation*}
f_{n}(x)=\bigl(1+x^{q_1}\bigr)\cdots \bigl(1+x^{q_n}\bigr).
\end{equation*}
It is clear that $f_n(x)$ is a polynomial of degree $S_n=q_1+\dots+q_n$.
Denote the coefficient of $f_n(x)$ at power $x^m$ by $\gamma_m(n)$; we
also let $\gamma_{m}(n)=0$ if $m\ge S_n+1$. Then
\begin{equation*}
f_n(x)=\sum_{m=0}^{S_n} \gamma_m(n) x^m.
\end{equation*}
Since
\begin{equation*}
\begin{split}
\sum_{m=0}^{S_{n+1}} \gamma_m(n+1) x^m&=f_{n+1}(x)=f_{n}(x)\cdot(1+x^{q_{n+1}})=
\biggl( \sum_{m=0}^{S_n} \gamma_m(n) x^m\biggr) \bigl(1+x^{q_{n+1}}\bigr)\\
&=\sum_{m=0}^{S_n} \gamma_m(n) x^m+
\sum_{m=0}^{S_n} \gamma_m(n) x^{m+q_{n+1}}
\end{split}
\end{equation*}
the comparison of the coefficients gives
\begin{equation} \label{eq01}
\gamma_m(n+1)=\left\{
\begin{array}{cl}
\gamma_m(n),&\quad 0\leq m<q_{n+1};\\
\gamma_m(n)+\gamma_{m-q_{n+1}}(n),&\quad q_{n+1}\leq m\leq S_n;\\
\gamma_{m-q_{n+1}}(n),&\quad S_n< m\leq S_{n+1}.
\end{array} \right.
\end{equation}
It follows from \eqref{eq01} that $\gamma_m(n+1)\geq \gamma_m(n)\geq 0$. On the other hand,
let $m\geq 0$ be arbitrary but fixed. Let $ n\in \bN$ be such that $m<q_{n+1}$. Then, by \eqref{eq01},
$\gamma_m(n+1)=\gamma_m(n)$. Since $m<q_{n+1}<q_{n+2}$ we have
$\gamma_m(n+2)=\gamma_m(n+1)$
and therefore $\gamma_m(n+2)=\gamma_m(n)$. By induction,
$\gamma_m(k)=\gamma_m(n)$, for all $k>n$.
Thus, we may define $\Gamma_m=\max\{ \gamma_m(n);\; n\in \bN\}$.
Note that $\Gamma_m>0$ if and only if there exist $n$ such that $\gamma_m(n)>0$ which is equivalent
to the fact that there exist distinct sequence members  $q_{j_1},\ldots,q_{j_\ell}$
where $\ell\geq 1$, such that $m=q_{j_1}+\cdots+q_{j_\ell}$.

\begin{theorem}  \label{theo02}
Let $1\le q_1<q_2<\cdots$ be a sequence of integers with partial sums $S_n=q_1+\dots+q_n$.
Consider
$$f_n(x)=(1+x^{q_1})\cdots (1+x^{q_n})=\sum_{m=0}^{S_n}\gamma_m(n) x^m.$$
 If there exist positive integers $k_0$ and $n_0$ such that
$$2k_0+q_{n+1}\le S_{n},\quad \text{for}\;\; n\ge n_0,\qquad\text{and}\qquad \gamma_m(n_0)\ge 1,\quad
\text{for}\;\; k_0\le m\le S_{n_0}-k_0,$$
then every integer $n\ge k_0$ is a sum of (one or more) different members of $\{q_1,q_2,\dots\}$.
\end{theorem}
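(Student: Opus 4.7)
\medskip

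\noindent\textbf{Proof plan.} The plan is to prove, by induction on $n\ge n_0$, the stronger statement
\[
(\ast_n)\qquad \gamma_m(n)\ge 1\quad\text{for every integer $m$ with }k_0\le m\le S_n-k_0.
\]
Once $(\ast_n)$ is established for every $n\ge n_0$, the theorem is immediate: given any integer $N\ge k_0$, since $S_n\to\infty$ one can choose $n\ge n_0$ with $S_n\ge N+k_0$, whence $\gamma_N(n)\ge 1$; by the observation recalled in the paragraph preceding the statement, this is exactly what it means for $N$ to be a sum of distinct members of $\{q_1,q_2,\dots\}$.

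The base case $(\ast_{n_0})$ is exactly the second hypothesis. For the inductive step, I would fix $n\ge n_0$, assume $(\ast_n)$, and let $m$ satisfy $k_0\le m\le S_{n+1}-k_0=S_n+q_{n+1}-k_0$. I would then distinguish the three branches of the recurrence \eqref{eq01}, obtaining in every case either $\gamma_m(n+1)\ge \gamma_m(n)$ (branches $1$ and $2$) or $\gamma_m(n+1)\ge \gamma_{m-q_{n+1}}(n)$ (branches $2$ and $3$). It is precisely here that the hypothesis $2k_0+q_{n+1}\le S_n$ enters: it is used to confirm both that every $m<q_{n+1}$ in the window already satisfies $m\le S_n-k_0$ (so $(\ast_n)$ applies to $\gamma_m(n)$ directly), and that whenever $S_n-k_0<m\le S_{n+1}-k_0$, the shifted index $m-q_{n+1}$ still lies in $[k_0,S_n-k_0]$, since $m-q_{n+1}>S_n-k_0-q_{n+1}\ge k_0$ while $m-q_{n+1}\le S_{n+1}-k_0-q_{n+1}=S_n-k_0$. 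In either situation $(\ast_n)$ supplies the desired lower bound on $\gamma_m(n+1)$.

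The main obstacle is purely clerical: lining up the three $m$-ranges with the three branches of \eqref{eq01} and applying the hypothesis $2k_0+q_{n+1}\le S_n$ at exactly the two points where $m$ (or $m-q_{n+1}$) would otherwise escape the inductive window $[k_0,S_n-k_0]$. No further ideas beyond the recurrence \eqref{eq01}, the elementary growth $S_n\to\infty$, and the stabilisation property $\Gamma_m=\gamma_m(n)$ for $m<q_{n+1}$ noted earlier appear to be needed.
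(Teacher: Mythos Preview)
Your proposal is correct and is essentially the paper's own argument: both prove the same inductive claim $(\ast_n)$ and derive the theorem from it via the stabilisation/growth of $S_n$. The only cosmetic difference is that the paper phrases the inductive step as the union of the two overlapping intervals $[k_0,S_n-k_0]$ and $[k_0+q_{n+1},S_{n+1}-k_0]$ (the overlap coming from $k_0+q_{n+1}\le S_n-k_0$), whereas you phrase it as a case split on the three branches of \eqref{eq01}; the content is identical.
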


\begin{proof}
We will prove by induction that, for $n\geq n_0$, we have
$$\gamma_m(n)\geq 1\quad\text{if}\;\; k_0\leq m\leq S_{n}-k_0.$$
It is obvious that this will imply $\Gamma_m\geq 1$, for all $m\geq k_0$, and the statement will follow.

Let $\eN\subseteq \bN$ denote the set of all integers $n\geq n_0$ such that $\gamma_m(n)\geq 1$ if
$k_0\leq m\leq S_n-k_0$. By the hypothesis, $n_0\in \eN$.
Assume that $n\in \eN$. If $k_0\leq m\leq S_n-k_0$, then $\gamma_m(n+1)\geq \gamma_m(n)\geq 1$,
by the inductive hypothesis. Similarly, if $m$ is such that $k_0\leq m-q_{n+1}\leq S_n-k_0$, then
$\gamma_{m-q_{n+1}}(n)\geq 1$, again by the inductive hypothesis, and therefore $\gamma_m(n+1)\geq 1$,
by \eqref{eq01}. Clearly, $k_0\leq m-q_{n+1}\leq S_n-k_0$ is equivalent to
$k_0+q_{n+1}\leq m\leq S_n+q_{n+1}-k_0=S_{n+1}-k_0$. Also, by the assumptions, $k_0+q_{n+1}\leq S_n-k_0$
so that the intersection of the intervals  $[k_0,S_n-k_0]$ and $[k_0+q_{n+1},S_{n+1}-k_0]$ is nonempty.
We conclude that $\gamma_m(n+1)\geq 1$, for all $m$ such that $k_0\leq m\leq S_{n+1}-k_0$. Hence, $n+1\in \eN$.
\end{proof}

We next show that there always exists $k_0$ which satisfies the first condition in \Cref{theo02}, provided that
the sequence of integers grows at most exponentially.

\begin{lemma} \label{lem03}
Let $1\le q_1<q_2<\dots$ be a sequence of integers satisfying $q_{n+1}<2 q_n$ and let $k_0\ge 0$ be a given integer.
Then, $S_n-q_{n+1}\ge 2k_0$, for every $n\ge q_1+2k_0+1$.
\end{lemma}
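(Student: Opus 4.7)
Write $T_n:=S_n-q_{n+1}$; the goal is to show $T_n\ge 2k_0$ whenever $n\ge q_1+2k_0+1$. My plan is to establish a one-step recursive lower bound $T_n\ge T_{n-1}+1$ and then to unroll it starting from the base case $T_1$.

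The first step is to observe that, since the $q_i$ are integers and the hypothesis $q_{n+1}<2q_n$ is strict, we actually have $q_{n+1}\le 2q_n-1$. Splitting off the last summand in $S_n$, this yields
\begin{equation*}
T_n=S_{n-1}+q_n-q_{n+1}\ge S_{n-1}+q_n-(2q_n-1)=S_{n-1}-q_n+1=T_{n-1}+1.
\end{equation*}
So each advance in $n$ gains at least one unit, which is the essential gain coming from the strict ``sub-doubling'' condition.

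The second step is the base case: $T_1=q_1-q_2\ge q_1-(2q_1-1)=1-q_1$. Iterating the recursion gives $T_n\ge T_1+(n-1)\ge n-q_1$ for every $n\ge 1$. Therefore $T_n\ge 2k_0$ as soon as $n\ge q_1+2k_0$, which is implied by the hypothesis $n\ge q_1+2k_0+1$ and finishes the proof.

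The argument is essentially bookkeeping; the only ``obstacle'' (really a small observation) is to squeeze a genuine $+1$ out of the strict integer inequality $q_{n+1}\le 2q_n-1$ at each step, and then to recognise that the resulting inequality telescopes cleanly into the desired linear lower bound $T_n\ge n-q_1$.
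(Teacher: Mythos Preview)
Your proof is correct and follows essentially the same approach as the paper: both extract the gain $q_{n+1}\le 2q_n-1$ from the strict integer inequality and telescope it to obtain $S_n-q_{n+1}\ge n-q_1$ (the paper writes the telescope out longhand and then adds a separate, slightly redundant, induction step, whereas you package it as the single recursion $T_n\ge T_{n-1}+1$). Your bound is in fact one unit sharper than the paper's displayed intermediate estimate, which is why you notice that already $n\ge q_1+2k_0$ suffices.
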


\begin{proof}
Notice first that $q_i<q_{i+1}\le 2q_i-1$, so that  $q_i-q_{i+1}\ge q_i-(2q_i-1)=-q_i+1$.
Then, proceeding backward, we get
\begin{equation*}
 \begin{split}S_i&-q_{i+1}=q_1+q_2+\cdots+(q_i-q_{i+1})\ge q_1+q_2+\cdots+(q_{i-1}-q_i)+1 \geq\\
 &\geq q_1+q_2+\cdots+(q_{i-2}-q_{i-1})+1+1\ge\dots\ge q_1-q_2+(i-2)\ge -q_1+(i-1).
 \end{split}
\end{equation*}
Hence, with $i=q_1+2k_0+1$ we get that $S_i-q_{i+1}\ge 2k_0$. This was the start of the induction.
To finish the proof, let $\eM$ denote the set of all integers $n\geq q_1+2k_0+1$ such that the
statement of the lemma is valid. If $n\in\eM$, then
$$ S_{n+1}-q_{n+2}=S_n-q_{n+1}+2q_{n+1}-q_{n+2}\geq S_n-q_{n+1}\ge 2k_0,$$
and therefore $n+1\in \eM$.
\end{proof}

\subsection{Prime numbers congruent $1$, respectively $3$, modulo $4$}

Let $d$ be a positive integer and let $ 1\leq r<d$ be such that $\gcd(d,r)=1$. The celebrated Dirichlet's Theorem
says that there are infinitely many prime numbers congruent to $r$ modulo $d$.
Let $\eP(d,r)=\{q_{d,r}(1)<q_{d,r}(2)<q_{d,r}(3)<\cdots \}$ be the sequence of all prime numbers congruent to $r$
modulo $d$ and let $S_{d,r}(n)$ denote the sum of the first $n$ prime numbers in $\eP(d,r)$. In what follows, we
are interested in $\eP(4,1)$ and $\eP(4,3)$.

By \cite[Theorem 1]{CH}, for every $x\geq 887$, sets $(x,1.040x]\cap \eP(4,1)$ and $(x,1.040x]\cap \eP(4,3)$
are nonempty, that is, there exist prime numbers congruent to $1$ and to $3$ modulo $4$
in the interval $(x,1.040x]$. Using this and a straightforward verification of the tables of prime numbers congruent
to $1$, respectively to $3$, modulo $4$ gives the following lemma.

\begin{lemma} \label{lem01}
For $x\geq 7$, the interval $(x, 2x]$ contains a prime number congruent to $1$ modulo $4$ as well as
a prime number congruent to $3$ modulo $4$.\qed
\end{lemma}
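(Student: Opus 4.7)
The plan is to split the argument at $x = 887$. For $x \ge 887$, the bound from \cite{CH} cited above already produces primes congruent to $1$ and to $3 \pmod 4$ in the smaller interval $(x, 1.040\,x]$, which is contained in $(x, 2x]$; so for such $x$ the conclusion is immediate, and the entire work lies in the finite range $7 \le x < 887$.

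For that range I would reduce the assertion to a factor-of-two gap condition on consecutive primes. Fix $r \in \{1, 3\}$ and write $\eP(4,r) = \{p_1 < p_2 < \cdots\}$. A prime in $\eP(4,r)$ lies in $(x, 2x]$ exactly when the smallest member of $\eP(4,r)$ strictly exceeding $x$ is at most $2x$. Letting $p_n$ be the largest element of $\eP(4,r)$ with $p_n \le x$, the ``next prime'' is the constant $p_{n+1}$ as $x$ ranges over $[p_n, p_{n+1})$, while $2x$ attains its infimum $2 p_n$ at the left endpoint. Hence it suffices to verify, in each residue class separately, the inequality $p_{n+1} \le 2 p_n$ for every consecutive pair with $p_n \ge 7$, together with a boundary check at $x = 7$ itself.

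This reduces the rest of the proof to a mechanical table check. The boundary case gives $13 \le 14$ when $r = 1$ and $11 \le 14$ when $r = 3$. Running through the standard lists of primes up to $2 \cdot 887 = 1774$ then confirms that every consecutive pair in $\eP(4,1)$ with smaller element at least $13$, and every consecutive pair in $\eP(4,3)$ with smaller element at least $7$, satisfies the required factor-of-two bound. The only consecutive pairs in either sequence that violate it are $(5, 13)$ inside $\eP(4,1)$ and $(3, 7)$ inside $\eP(4,3)$; both are excluded by the hypothesis $x \ge 7$. The sole obstacle is to organise this finite verification cleanly; no further analytic input beyond \cite{CH} is needed.
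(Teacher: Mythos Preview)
Your proposal is correct and follows essentially the same route as the paper: split at $x=887$, invoke \cite{CH} for large $x$, and verify the finite range $7\le x<887$ directly. The paper's own argument is just the sentence preceding the lemma together with a reference to ``straightforward verification of the tables''; your Bertrand-type reduction to the inequalities $p_{n+1}\le 2p_n$ (plus the boundary check at $x=7$) is a clean way to organise that finite verification, but it is not a different method.
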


In the following table, we list the first $15$ prime numbers congruent to $1$, respectively to $3$, modulo $4$,
and their partial sums.

\begin{table}[h!]
{\tiny
\begin{tabular}{|c|| c|c|c|c|c|c|c|c|c|c|c|c|c|c|c|}
\hline
$n$ 		& 1 	& 2 	 & 3   & 4   & 5   & 6  & 7	 & 8   & 9   & 10 & 11 & 12 & 13 & 14 & 15 \\
\hline\hline
$q_{4,1}(n)$ 	& 5 	& 13 & 17 & 29 & 37 & 41	& 53 & 61 & 73 & 89 & 97 & 101 & 109 & 113 & 137 \\
\hline
$S_{4,1}(n)$ 	& 5 	& 18 & 35 & 64 & 101 & 142 & 195 & 256 & 329 & 418 & 515 & 616 & 725 & 838 & 975 \\
\hline \hline
$q_{4,3}(n)$ 	& 3 	& 7	 & 11 & 19 & 23 & 31 & 43 & 47 & 59 & 67 & 71 & 79 & 83 & 103 & 107 \\
\hline
$S_{4,3}(n)$ 	& 3 	& 10 & 21 & 40 & 63 & 94 & 137 & 184 & 243 & 310 & 381 & 460 & 543 & 656 & 753 \\
\hline
\end{tabular}   \vspace{1mm}
\caption{\label{Tab:1}}
}
\end{table}\medskip

\begin{lemma} \label{lem02}
(a) If $n\geq 10$, then $S_{4,1}(n)-q_{4,1}(n+1)\geq 244$.

(b) If $n\geq 8$, then $S_{4,3}(n)-q_{4,3}(n+1)\geq 112$.
\end{lemma}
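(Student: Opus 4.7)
The plan is to treat both parts uniformly: verify a base case from Table~\ref{Tab:1} and then run the exact induction step that appears in the proof of \Cref{lem03}. Writing $q(n)=q_{4,r}(n)$ and $S(n)=S_{4,r}(n)$ for $r\in\{1,3\}$, the key algebraic identity is
\begin{equation*}
S(n+1)-q(n+2)=\bigl(S(n)-q(n+1)\bigr)+\bigl(2q(n+1)-q(n+2)\bigr),
\end{equation*}
so as long as $2q(n+1)\ge q(n+2)$, the quantity $S(n)-q(n+1)$ is non-decreasing in $n$.

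For part (a) the base case is $n=10$: from Table~\ref{Tab:1} we read $S_{4,1}(10)-q_{4,1}(11)=418-97=321\ge 244$. For part (b) the base case is $n=8$: again from Table~\ref{Tab:1}, $S_{4,3}(8)-q_{4,3}(9)=184-59=125\ge 112$. Both checks are simple arithmetic.

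For the inductive step, assume the claim holds for some $n$ at or above the base value. Since $q_{4,1}(n+1)\ge q_{4,1}(11)=97\ge 7$ in case (a) and $q_{4,3}(n+1)\ge q_{4,3}(9)=59\ge 7$ in case (b), \Cref{lem01} guarantees a prime congruent to the appropriate residue modulo $4$ in the interval $\bigl(q(n+1),2q(n+1)\bigr]$. Consequently $q(n+2)\le 2q(n+1)$, so the second summand in the displayed identity is non-negative, and the induction hypothesis carries the lower bound ($244$ or $112$) from $n$ to $n+1$.

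There is essentially no obstacle: the only slightly subtle point is that \Cref{lem03} applied with $k_0\in\{122,56\}$ would force the conclusion only for $n$ much larger than $10$ or $8$, so we cannot use \Cref{lem03} as a black box. Instead we borrow only its inductive mechanism, seeded by the sharper numerical base case that the table provides.
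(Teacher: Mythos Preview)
Your proof is correct and follows essentially the same approach as the paper's own argument: verify the base cases $n=10$ and $n=8$ numerically from the table, then use the identity $S(n+1)-q(n+2)=(S(n)-q(n+1))+(2q(n+1)-q(n+2))$ together with \Cref{lem01} to show the quantity is non-decreasing. If anything, your justification of the hypothesis $q(n+1)\ge 7$ needed for \Cref{lem01} is slightly more careful than the paper's.
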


\begin{proof}
(a) Let $\eM$ denote the set of all integers $n\ge 10$ such that the statement (a)  of the lemma is valid.
The table shows that $S_{4,1}(10)-q_{4,1}(10)=418 -97= 321$, so $10\in\eM$.
To prove the inductive step, let $n\in\eM$. Then, $n\ge 10 >7$ so by   Lemma~\ref{lem01},
\begin{align*}
   S_{4,1}(n+1)-q_{4,1}(n+2)&=S_{4,1}(n)-q_{4,1}(n+1)+2q_{4,1}(n+1)-q_{4,1}(n+2)\\
   &\geq
S_{4,1}(n)-q_{4,1}(n+1)\ge 244,
\end{align*}
and therefore $n+1\in \eM$.

(b) Let now $\eM$ denote the set of all integers $n\ge 8$ such that the statement (b)  of the lemma is valid. The table shows that
$S_{4,3}(8)-q_{4,3}(8)=184 -59= 125$, so $8\in\eM$. The rest proceeds as above.
\end{proof}

\begin{theorem}  \label{theo01}
(a) For every integer $m\geq 122$, there exist distinct prime numbers  $p_{j_1},\ldots,p_{j_k}$ $ \in \eP(4,1)$
($k\geq 1$) such that $m=p_{n_{j_1}}+\cdots+p_{j_k}$.

(b) For every integer $m\geq 56$, there exist distinct prime numbers  $q_{j_1},\ldots,q_{j_\ell} \in \eP(4,3)$
($\ell\geq 1$) such that $m=q_{j_1}+\cdots+q_{j_\ell}$.
\end{theorem}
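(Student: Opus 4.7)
The plan is to apply \Cref{theo02} separately to the two sequences $\eP(4,1)$ and $\eP(4,3)$, with parameters $k_0,n_0$ calibrated to the bounds already established in \Cref{lem02}. The growth hypothesis $2k_0+q_{n+1}\le S_n$ of \Cref{theo02} is tailor-made for the output of \Cref{lem02}, and in each case the conclusion of \Cref{theo02} will supply exactly the threshold $m\ge k_0$ claimed in \Cref{theo01}.

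Concretely, for part (a) I would take $k_0=122$ and $n_0=10$. The growth hypothesis $2k_0+q_{4,1}(n+1)\le S_{4,1}(n)$ for $n\ge n_0$ is then precisely \Cref{lem02}(a), since $2\cdot 122=244$. The second hypothesis of \Cref{theo02} demands $\gamma_m(10)\ge 1$ for every $m\in[122,\;S_{4,1}(10)-122]=[122,296]$; equivalently, every integer in this interval must be written as a sum of distinct members of the ten primes $\{5,13,17,29,37,41,53,61,73,89\}$ read off from \Cref{Tab:1}. For part (b) I would take $k_0=56$ and $n_0=8$, invoke \Cref{lem02}(b) (since $2\cdot 56=112$) for the growth condition, and then verify that every $m\in[56,\;S_{4,3}(8)-56]=[56,128]$ is a sum of distinct elements of $\{3,7,11,19,23,31,43,47\}$.

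The substantive step left is therefore the base-case verification in each part: expressing the $175$ integers of $[122,296]$ as subset sums of a $10$-element prime set in~(a), and the $73$ integers of $[56,128]$ as subset sums of an $8$-element prime set in~(b). I expect this combinatorial bookkeeping to be the main obstacle; the rest of the argument is a direct invocation of \Cref{theo02} (which also records why no larger $n_0$ is needed, since enlarging $n_0$ only expands the interval to be checked). I would dispatch it by a compact case analysis, for instance by grouping the target integers into short contiguous blocks, each covered by a single ``recipe'' of the shape ``fix a partial sum from the top primes and vary one or two of the small primes'', or alternatively by a greedy decomposition that peels off the largest admissible prime at each stage and then handles a short list of ad hoc exceptions by hand. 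Both variants are finite and mechanical; the real task is merely to present them concisely.
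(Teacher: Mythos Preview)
Your overall strategy is exactly the paper's: feed \Cref{lem02} into \Cref{theo02} with $k_0=122$ (resp.\ $k_0=56$) and then verify the base-case interval by direct computation. The gap is in your choice of $n_0$. With $n_0=10$ in part~(a) the base case is false: every prime in $\eP(4,1)$ is $\equiv 1\pmod 4$, so a sum of $k$ distinct such primes is $\equiv k\pmod 4$; since $122\equiv 2\pmod 4$ you would need $k\in\{2,6,10\}$, but no two of $\{5,13,17,29,37,41,53,61,73,89\}$ sum to $122$, the smallest six of them already sum to $142>122$, and all ten sum to $418$. Hence $\gamma_{122}(10)=0$ and the hypothesis $\gamma_m(10)\ge 1$ on $[122,296]$ fails at its very first point. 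The same phenomenon occurs in part~(b) with $n_0=8$: here $59\equiv 3\pmod 4$ forces $k\equiv 1\pmod 4$, so $k\in\{1,5\}$, but $59\notin\{3,7,11,19,23,31,43,47\}$ and the five smallest of these already sum to $63>59$; thus $\gamma_{59}(8)=0$.

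The remedy is precisely what the paper does: enlarge $n_0$ to $13$ in~(a) and to $12$ in~(b). Then $122=109+13$ and $59=59$ become available (and one checks $n_0=11,12$ still fail for $122$, so $13$ is actually minimal in~(a)). Your parenthetical remark that ``enlarging $n_0$ only expands the interval to be checked'' is the source of the error: enlarging $n_0$ also enlarges the set of primes you may use, and this trade-off is essential. Once you take the paper's parameters, the rest of your plan---a finite verification of $\gamma_m(n_0)\ge1$ on $[k_0,S_{n_0}-k_0]$ via the generating polynomial, followed by a direct appeal to \Cref{theo02}---is correct and matches the paper.
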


\begin{proof}
(a) Let $(k_0,n_0)=(122,13)$. By Lemma~\ref{lem02}, $S_{4,1}(n)-q_{4,1}(n+1)\ge 2k_0=244$ for all $n\ge n_0$.
Also, a direct, though tedious, computation shows that the generating polynomial
$$\prod_{i=1}^{13}(1+x^{q_{4,1}(i)})=
(1 + x^5) (1 + x^{13}) \cdots (1 + x^{101}) (1 +  x^{109})=\sum_{i=0}^{S_{4,1}(13)}\gamma_{m}(13) x^m$$
satisfies $\gamma_m(13)\ge 1$ for all $m$ in interval $[122,603]=[122,725-122]=[k_0,S_{4,1}(n_0)-k_0]$
(we remark that $\gamma_{121}(13)=\gamma_{604}(13)=0$). The rest follows from \Cref{theo02}.

(b)  Let now $(k_0,n_0)=(56,12)$. By \Cref{lem02}, $S_{4,3}(n)-q_{4,3}(n+1)\ge 2k_0=112$ for all $n\ge n_0$.
Also, a direct computation shows that the generating polynomial
$$\prod_{i=1}^{12}(1+x^{q_{4,3}(i)})=
(1 + x^3) (1 + x^{7}) \cdots (1 + x^{71}) (1 +  x^{79})=
\sum_{i=0}^{S_{4,3}(12)}\gamma_{m}(12) x^m$$
satisfies $\gamma_m(12)\ge 1$ for all $m$ in interval $[56,404]=[56,460-56]=[k_0,S_{4,3}(n_0)-k_0]$ (here,
$\gamma_{55}(12)=\gamma_{405}(12)=0$). The rest again follows from \Cref{theo02}.
\end{proof}

\begin{corollary} \label{cor01}
For every integer $m\geq 122$, there exist $k, l\in \bN$ and distinct prime numbers $p_1,\ldots, p_k\in \eP(4,1)$
and $q_1,\ldots, q_l\in \eP(4,3)$ such that
$m=p_1+\cdots+p_k=q_1+\cdots+q_l.$\qed
\end{corollary}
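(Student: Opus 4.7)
The plan is essentially to observe that \Cref{cor01} is nothing more than the simultaneous application of the two parts of \Cref{theo01}. Part (a) of that theorem guarantees, for every $m\geq 122$, the existence of distinct primes $p_1,\ldots,p_k\in\eP(4,1)$ with $m=p_1+\cdots+p_k$. Part (b) guarantees, for every $m\geq 56$, the existence of distinct primes $q_1,\ldots,q_l\in\eP(4,3)$ with $m=q_1+\cdots+q_l$.

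Since $122\geq 56$, any integer $m\geq 122$ falls in the range where both conclusions apply simultaneously. Thus I would simply fix such an $m$, invoke \Cref{theo01}(a) to obtain the first decomposition $m=p_1+\cdots+p_k$ with $p_i\in\eP(4,1)$ distinct, then invoke \Cref{theo01}(b) to obtain the second decomposition $m=q_1+\cdots+q_l$ with $q_j\in\eP(4,3)$ distinct. Setting these two expressions equal to $m$ (and to each other) yields exactly the claimed identity.

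There is no real obstacle: the two decompositions are produced independently, and since $\eP(4,1)$ and $\eP(4,3)$ are disjoint subsets of the primes, there is no interaction or overlap to worry about between the $p_i$'s and the $q_j$'s. The only verification is the trivial inequality $122\geq 56$ which ensures both hypotheses of \Cref{theo01} are active. Accordingly, the proof reduces to a one-line deduction from the preceding theorem, and the corollary is naturally stated without further argument (as indicated by the \textsf{qed}-symbol already appearing in the statement).
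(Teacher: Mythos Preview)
Your proposal is correct and matches the paper's approach exactly: the corollary is stated with a \textsf{qed}-symbol and no proof precisely because it is the immediate conjunction of parts (a) and (b) of \Cref{theo01}, using only that $122\geq 56$.
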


Table~\ref{Tab:2} shows that in addition to the above
corollary, every integer which does not belong to the union $ [1, 17]\cup
[19, 21]\cup [23, 28]\cup [31, 33]\cup [35, 36]\cup [38, 40]\cup [43, 45]\cup
[48, 49] \cup [51, 52]\cup[55, 57]\cup  \cup \{60, 62, 65, 68, 69, 77, 80,
81, 85, 93, 121\}$ can also be written as a sum of different primes from the
class $\eP(4,1)$ as well as the sum of different primes from the class
$\eP(4,3)$.
\begin{table}[h!]
{\tiny
\begin{tabular}{|l|l|l|}
\hline
 18=5+13=7+11\hspace{24mm} & 22=5+17=3+19& 29=29=3+7+19 \\ \hline
 30=13+17=7+23 &  34=5+29=3+31 \hspace{24mm} & 37=37=3+11+23 \\ \hline
 41=41=3+7+31 & 42=5+37=11+31 & 46=5+41=3+43 \hspace{24mm} \\ \hline
 47=5+13+29=47 & 50=13+37=3+47 & 53=53=3+7+43 \\ \hline
 54=13+41=7+47 & 58=5+53=11+47 & 59=5+13+41=59 \\ \hline
 61=61=3+11+47 & 63=5+17+41=3+7+11+19+23 & 64=5+13+17+29=3+7+11+43 \\ \hline
 66=5+61=7+59  & 67=13+17+37=67 & 70=17+53=3+67 \\ \hline
 71=5+13+53=71 &  72=5+13+17+37=3+7+19+43 & 73=73=3+11+59 \\ \hline
 74=13+61=3+71 & 75=5+17+53=3+7+11+23+31 & 76=5+13+17+41=3+7+19+47 \\ \hline
 78=5+73=7+71 & 79=5+13+61=79  & 82=29+53=3+79 \\ \hline
 83=5+17+61=83 &  84=5+13+29+37=3+7+31+43 & 86=13+73=3+83 \\ \hline
 87=5+29+53=3+7+11+19+47 & 88=5+13+17+53=3+7+11+67 & 89=89=3+7+79 \\ \hline
 90=17+73=7+83 & 91=5+13+73=3+7+11+23+47 & 92=5+17+29+41=3+7+11+71 \\ \hline
 94=5+89=11+83 & 95=5+17+73=3+7+11+31+43 & 96=5+13+17+61=3+7+19+67 \\ \hline
 97=97=3+11+83  & 98=37+61=19+79 & 99=5+41+53=3+7+11+19+59 \\ \hline
 100=5+13+29+53=3+7+11+79 & 101=101=3+19+79 & 102=5+97=19+83 \\ \hline
 103=5+37+61=3+7+11+23+59 & 104=5+17+29+53=3+7+11+83 & 105=5+13+17+29+41=3+19+83 \\ \hline
 106=5+101=23+83 & 107=5+13+89=3+7+11+19+67 & 108=5+13+17+73=3+7+19+79 \\ \hline
 109=109=3+23+83 & 110=13+97=31+79 & 111=5+17+89=3+7+11+19+71 \\ \hline
 112=5+13+41+53=3+7+19+83 & 113=113=3+31+79 & 114=5+109=31+83 \\ \hline
 115=5+13+97=3+7+11+23+71 & 116=5+13+37+61=3+7+23+83 & 117=5+13+17+29+53=3+31+83 \\ \hline
 118=5+113=47+71 & 119=5+13+101=3+7+11+19+79 & 120=5+13+29+73=3+7+31+79   \\  \hline
\end{tabular} \vspace{1mm}
\caption{\label{Tab:2}}}
\end{table}

By using also other residuum classes of primes we can get additional integers
that can be expressed as a sum of different primes from two disjoint subsets
(Table~\ref{Tab:3}).
\begin{table}[h!]
{\tiny
\begin{tabular}{|l|l|l|}
\hline
16=3+13=5+11 \hspace{22mm} &  19=19=3+5+11  & 20=3+17=7+13 \\ \hline
23=23=3+7+13  & 24=5+19=7+17 \hspace{22mm}  & 26=3+23=7+19 \\ \hline
28=5+23=11+17  & 31=31=3+5+23 & 32=3+29=13+19 \\ \hline
33=3+7+23=5+11+17 & 35=3+13+19=5+7+23 & 36=5+31=7+29 \\ \hline
38=7+31=3+5+11+19  & 39=3+5+31=7+13+19 & 40=3+37=11+29 \\ \hline
43=43=3+11+29  & 44=3+41=7+37 & 45=3+11+31=5+17+23 \\ \hline
48=5+43=7+41  & 49=3+5+41=7+11+31 & 51=3+5+43=7+13+31 \\ \hline
52=5+47=11+41  & 55=3+5+47=7+11+37 & 56=13+43=19+37 \\ \hline
57=3+7+47=5+11+41  & 60=13+47=17+43 & 62=19+43=3+5+7+47 \\ \hline
65=3+19+43=5+13+47 & 68=31+37=3+5+13+47 & 69=3+19+47=5+23+41 \\ \hline
77=3+31+43=7+23+47 & 80=37+43=3+5+31+41 & 81=3+31+47=11+29+41 \\ \hline
85=5+37+43=7+31+47 & 93=3+43+47=23+29+41 & 121=31+43+47=3+11+29+37+41 \\ \hline
\end{tabular}  \vspace{1mm}
\caption{\label{Tab:3}}
}
\end{table}

Notice that Tables \ref{Tab:2} and \ref{Tab:2} together with \Cref{cor01}
show that all integers except those in $\{1, 2, 3, 4, 5, 6, 7, 8, 9, 10, 11,
12, 13, 14, 15, 17, 21, 25, 27\}$ can be written as a sum of distinct primes
in at least two ways such that no prime appears in two different sums.

\begin{corollary} \label{cor02}
Let  $n\ge 4$. If $n$ and $n-1$ are not prime numbers, then $\diam\Delta^o(\pS_n)^\ast=3$.
\end{corollary}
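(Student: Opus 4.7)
The plan is to invoke \Cref{prop}: it suffices to produce disjoint nonempty sets $\eT_1,\eT_2$ of primes $\le n$ and positive integer exponents $\alpha_p,\beta_q$ with $M^\alpha_{\eT_1}\in\{n-1,n\}$, $M^\beta_{\eT_2}\le n$, and $p+M^\beta_{\eT_2}>n$ for every $p\in\eT_1$. The cleanest route is to find some $m\in\{n-1,n\}$ that admits two decompositions $m=p_1+\cdots+p_k=q_1+\cdots+q_\ell$ into distinct primes with $\{p_i\}\cap\{q_j\}=\emptyset$: setting $\eT_1=\{p_i\}$, $\eT_2=\{q_j\}$ and all exponents equal to $1$ yields $M^\alpha_{\eT_1}=M^\beta_{\eT_2}=m$, so $M^\beta_{\eT_2}\le n$ holds trivially and $p+M^\beta_{\eT_2}\ge 2+(n-1)>n$ for every prime $p\in\eT_1$.

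For $n\ge 123$, the two-way decomposition of $m=n-1\ge 122$ is supplied by \Cref{cor01}, with the two prime sets automatically disjoint since they lie in $\eP(4,1)$ and $\eP(4,3)$ respectively. For $16\le n\le 122$, Tables~\ref{Tab:2} and~\ref{Tab:3} together exhibit disjoint two-way decompositions of every integer in $[16,121]\setminus\{17,21,25,27\}$. A brief case check shows that when $n\ge 16$ and both $n,n-1$ are composite, at least one of the two falls in the tabulated range: $n=17$ is prime, the problematic values $n\in\{21,25,27\}$ are handled via $m=n-1\in\{20,24,26\}$, and the boundary $n=122$ is handled via $m=n-1=121$, the last entry of Table~\ref{Tab:3}.

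The remaining obstacle --- and really the only delicate part --- is the trio of small exceptional values $n\in\{9,10,15\}$, the only $n\ge 4$ with both $n,n-1$ composite yet below the tabulated range. Here I would just write down explicit choices: $n=9$ admits $\eT_1=\{3,5\}$, $\eT_2=\{7\}$ with $M^\alpha=8$, $M^\beta=7$; $n=15$ admits $\eT_1=\{3,11\}$, $\eT_2=\{13\}$ with $M^\alpha=14$, $M^\beta=13$. The tightest case is $n=10$, where no distinct-prime disjoint pair meets $p+M^\beta_{\eT_2}>10$ and one must allow a prime power: taking $\eT_1=\{3,7\}$ and $\eT_2=\{2\}$ with $\beta_2=3$ gives $M^\alpha=10$ and $M^\beta=8$, with $3+8=11>10$ and $7+8=15>10$. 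Verifying the three conditions in each of these cases completes the argument.
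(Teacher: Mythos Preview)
Your proof is correct and follows essentially the same route as the paper: reduce to \Cref{prop} via two disjoint distinct-prime decompositions of some $m\in\{n-1,n\}$, supplied by \Cref{cor01} and Tables~\ref{Tab:2}--\ref{Tab:3}, then treat the handful of sporadic small $n$ separately. The only difference is in that last step: the paper dispatches $n\in\{9,10,15,25,27\}$ by citing \cite[Corollary~4.13]{Dalal-Mukherjee-Patra}, whereas you absorb $n=25,27$ into the table range via $m=n-1$ and give explicit $(\eT_1,\eT_2)$ for $n\in\{9,10,15\}$, making your argument self-contained.
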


\begin{proof}
By \Cref{cor01} and the tables above, we see that for every nonprime number $n\ge 16$, except for  $n= 21, 25, 27$,
there are disjoint subsets $\eT_1$ and $\eT_2$ of the sequence of primes $\eP$, neither containing the prime $2$,
such that $n=\sum\limits_{p\in \eT_1}p=\sum\limits_{q\in \eT_2}q$. For those integers, the claim follows directly from
\Cref{prop}. The remaining integers $n$, for which neither $n$ nor $n-1$ is a prime number, are $n=9,10,15,21,25,27$.
Except for $n=15,21$, they are all either powers of a prime number or are immediate successors of powers of a prime
number and the claim follows from~\cite[Corollary 4.13]{Dalal-Mukherjee-Patra} ($n=15$ is also considered there).
If $n=21$, then $n-1=20=3+17=7+13$ and \Cref{prop} again is applicable.
\end{proof}

\subsection*{Acknowledgment}
The authors were partially supported by the Slovenian Research and Innovation Agency through the research programs
P2-0268, P1-0285, and research projects N1-0210, N1-0296, and J1-50000.

	
\end{document}